\newtheorem{theorem}{Theorem}[section]
\newtheorem{proposition}[theorem]{Proposition}
\newtheorem{corollary}[theorem]{Corollary}
\newtheorem{remark}[theorem]{Remark}
\newtheorem{example}[theorem]{Example}
\DeclareMathOperator*{\esslim}{esslim}
\newcommand{\Om}{\Omega}
\newcommand\R{{\mathbb R}}
\newcommand\ve{\varepsilon}
\newcommand\vf{\varphi}
\newcommand \la{\lambda}
\newcommand{\wstar}{\overset\star\rightharpoonup}
\renewcommand{\bar}{\overline}
\renewcommand{\le}{\leqslant}
\renewcommand{\ge}{\geqslant}
\numberwithin{equation}{section}
\begin{document}

\title{Quasi-static Limit for a Hyperbolic Conservation Law}

% \author
% {Anna De Masi}
% \address
% {Anna De Masi : Universit\`a dell'Aquila}
% \email
% {\tt }

\author
{Stefano Marchesani}
\address
{Stefano Marchesani: GSSI, 67100 L'Aquila, Italy}
\email
{\tt stefano.marchesani@gssi.it}

\author
{Stefano Olla*}
\address
{Stefano Olla (\emph{corresponding author}):
  CEREMADE, UMR CNRS\\
Universit\'e Paris-Dauphine, PSL Research University\\
75016 Paris, France\\
and GSSI, 67100 L'Aquila, Italy}
\email
{\tt olla@ceremade.dauphine.fr}
\thanks{
  This work was partially supported by ANR-15-CE40-0020-01 grant LSD. We thank Anna De Masi for inspiring discussions and remarks. We thank an anonimous referee whose comments and suggestions helped improve our results and presentation.}

\author
{Lu Xu}
\address
{Lu Xu: GSSI, 67100 L'Aquila, Italy }
\email
{\tt lu.xu@gssi.it}

%\date{\today. {\bf File: {\jobname}.tex.}} 

\begin{abstract}
  We study the quasi-static limit for the $L^\infty$ entropy weak solution
  of scalar one-dimensional hyperbolic equations with strictly concave or convex flux
  and time dependent boundary conditions.
  The quasi-stationary profile evolves with the quasi-static equation,
  whose entropy solution is determined by the stationary profile corresponding to
  the boundary data at a given time. 
\end{abstract}

 \keywords{Scalar hyperbolic equations, quasi-static limits}
 \subjclass[2010]{82C70, 60K35}

\maketitle

\section{Introduction}
\label{sec:introduction}

The term \emph{quasi-static evolution}
refers to dynamics driven by external boundary conditions
or forces that change in a time scale much longer than the typical time scale
of the convergence to stationary state of the dynamics.
In the time scale of the changes of the exterior conditions the system
is very close to the corresponding stationary state.
This ideal evolutions are fundamental in Thermodynamics and in many other
situations. We are interested in studying dynamics where the
corresponding quasi-stationary state is of \emph{non-equilibrium}, i.e.
it presents non-vanishing currents of conserved quantities.

In a companion article \cite{DOLXM} we study the quasi-static limit for the
one-dimensional open asymmetric simple exclusion process (ASEP).
The symmetric case was studied in \cite{DO}.
This is a dynamics
where the stationary non-equilibrium states are well studied
\cite{Derrida93,Schutz99,Uchi04}.
The macroscopic equation for the ASEP is given by the traffic flow equation
on the one-dimensional finite interval $[0,1]$:
\begin{equation}
\label{eq:introq0}
  \partial_t u + \partial_xJ\left(u\right) = 0,
\end{equation}
with the flux $J(u) = u(1-u)$,
with time dependent boundary conditions
$u(t,0) = \rho_-(t), u(t,1) = \rho_+(t)$, resulting from the interaction
with external reservoirs. Notice that, 
after the linear transformation $v = 1- 2u$, \eqref{eq:introq0}
is equivalent to Burger's equation
$\partial_t v + \partial_x(\frac{v^2}2) = 0$.
  
For time independent boundary conditions and
a special choice of the dynamics of the reservoirs for the open ASEP,
equation \eqref{eq:introq0}
is obtained as hydrodynamic limit in \cite{Bahadoran12}.
More precisely the hydrodynamic limit generates the $L^\infty$ 
entropy weak solution of  \eqref{eq:introq0}
in the sense of \cite{Otto96}.

Let us consider now the situation when the boundary conditions change
in a slower time scale: for $\ve>0$ small, consider for \eqref{eq:introq0}
the boundary conditions $u(t,0) = \rho_-(\ve t), u(t,1) = \rho_+(\ve t)$.
In order to see the effect of the changes in the boundaries, we need to
look at the evolution in this time scale, i.e. defining
$u^\ve(t,x) = u(\ve^{-1}t,x)$, it will satisfy the equation
\begin{equation}
\label{eq:q0intro}
  \left\{
  \begin{aligned}
    &\ve\partial_t u^\ve + \partial_xJ\left(u^\ve \right) = 0, \quad x\in(0,1), \ t>0,\\
    &u^\ve(t,0) = \rho_-(t), \quad u^\ve(t,1) = \rho_+(t), \quad u^\ve(0,x)=u_0(x).
  \end{aligned}
  \right.
\end{equation}
The main result in this article concerns the convergence of $u^\ve$
to the entropy weak solution of the quasi-static equation
(see section \ref{sec:quasi-stat-equat} for the definition)
\begin{equation}
  \label{eq:QSintro3}
   \partial_xJ (u) = 0, \qquad
    u(t,0) = \rho_-(t), \quad u(t,1) = \rho_+(t).
  \end{equation}
  It turns out that such solutions can only achieve two values
  with at most one upward discontinuity (shock) in the interior of
  the interval $[0,1]$, so they are necessarily of bounded variation
  (see Proposition \ref{BV}). Outside the critical line
  $\{\rho_-(t) + \rho_+(t) = 1, \rho_-(t)<1/2\}$ the solution is unique and
  constant in space (see Proposition \ref{prop:out}).
  On the other hand on the critical line there are infinitely many entropy solutions,
  corresponding to different position of the single shock,
  associated to the same value of the current.
  Consequently we can prove the convergence of $u^\ve$
  to the unique quasi-static solution of the quasi-static equation
  only if $(\rho_-(t), \rho_+(t))$ remains outside the critical line
  for almost every $t$ (see Theorem \ref{thm:quasi}). 
  On the critical line we can only prove the convergence
  to a \emph{measure-valued} solution (cf. Remark \ref{rem-crit}).
  In all cases the quasi-stationary current $\mathcal J(t)$ is constant is space,
  and its value is determined by a variational problem (cf. \eqref{eq:var}):
  the entropy quasi-stationary solution minimize
  $J(\rho) $ when $\rho_-(t) < \rho_+(t)$ (drift up-hill)
  and maximize it when $\rho_-(t) \ge \rho_+(t)$ (drift down-hill).

  Since the ideas contained to this article do not depend on the specific choice
  of the flux $J$, we will expose our results for a generic scalar equation
  \eqref{eq:introq0} with $J(u)$ strictly convex or concave and with $J(0) = 0 = J(u_1)$
  with for some $u_1 >0$. Without losing generality we can set $u_1 =1$ and $J(u)$
  non-negative and strictly concave.

\section{A scalar hyperbolic equation with boundary conditions}
\label{sec:burger}

Consider the following initial--boundary problem for a scalar equation 
on the one-dimensional finite interval $[0,1]$ 
\begin{equation}
\label{eq:b0}
  \left\{
  \begin{aligned}
    &\partial_tv(t,x) + \partial_xJ(v(t,x)) = 0, &&t>0, \ x\in(0,1), \\
    &v(t,0) = \rho_-(t), \ v(t,1) = \rho_+(t), &&t>0, \\
    &v(0,x) = v_0(x), &&x\in[0,1], 
  \end{aligned}
  \right.
\end{equation}
where $\rho_\pm \in L^\infty(\R_+)$ and $v_0\in L^\infty([0,1])$. 
Assume that 
\begin{align}
\label{eq:current}
  J \in C^2(\R), \quad J''<0, \quad J(0)=J(1)=0. 
\end{align}
Also assume that the boundary and initial data are bounded: 
$\rho_\pm(t) \in [0,1]$ for all $t>0$ and $v_0 \in [0,1]$ for almost all $x\in[0,1]$. 
The solution $v \in L^\infty(\R_+\times[0,1])$ is intended in the weak sense: 
for any $\phi \in C_0^\infty(\R\times(0,1))$, 
\begin{equation}
\label{eq:b1}
  \int_0^\infty \int_0^1 \big[v\partial_t\phi + J(v)\partial_x\phi\big]dx\,dt
  + \int_0^1 v_0(x)\phi(0,x)dx = 0. 
\end{equation}
Furthermore, $u$ satisfies the entropy inequality: 
for any $\vf \in C_0^\infty(\R\times(0,1))$ such that $\vf\ge0$, 
\begin{equation}
\label{eq:b2}
\int_0^\infty \int_0^1 \big[S(v)\partial_t\vf + Q(v)\partial_x\vf \big]dx\,dt
+ \int_0^1 S(v_0(x))\vf(0,x)dx \ge 0, 
\end{equation}
where $(S,Q)$ is any pairs of functions such that 
\begin{equation}
\label{eq:entropy}
  S, Q \in C^2(\R), \quad S'' \ge 0, \quad Q' = J'S'. 
\end{equation}
A pair of functions $(S,Q)$ that satisfies \eqref{eq:entropy}
is called a \emph{Lax entropy--entropy flux pair} associated to \eqref{eq:b0}. 
Observe that \eqref{eq:b2} implies the Rankine--Hugoniot jump condition for \eqref{eq:b0}: 
inside the interval eventual discontinuities must be \emph{upwards shocks}. 

Notice that discontinuities can appear at the boundaries. 
The boundary conditions in \eqref{eq:b0} are satisfied in the following sense. 
Assume for the moment that $v(t,\cdot)$ is of bounded variation for each $t$,
so that the limits
\begin{equation*}
  v_-(t) = \lim_{x\to0+} v(t,x), \quad v_+(t) = \lim_{x\to1-} v(t,x) 
\end{equation*}
are well-defined. 
Then the Bardos--LeRoux--N\'ed\'elec boundary conditions \cite{BLN}
of the entropy solution $v$ reads for all $t>0$, 
\begin{equation}
\label{eq:boundary1}
  \mathrm{sign} (v_-(t) - \rho_-(t)) \big[J(v_-(t)) - J(k)\big] \le 0 
\end{equation}
for all $k \in I[v_-(t), \rho_-(t)]$ and 
\begin{equation}
\label{eq:boundary2}
  \mathrm{sign} (v_+(t) - \rho_+(t)) \big[J(v_+(t)) - J(k)\big] \ge 0 
\end{equation}
for all $k \in I[v_+(t), \rho_+(t)]$,
where $I[a,b]$ denotes the closed interval with extremes given by $a$ and $b$. 

Otto in \cite{Otto96} extended the characterization of boundary conditions
to general entropy solutions $v \in L^\infty$ by the use of
\emph{boundary entropy--entropy flux pair}. 
A pair of two-variable functions $(S,Q)$ is called
a \emph{boundary entropy--entropy flux pair}
if $S$, $Q \in C^2(\R^2)$, $(S,Q)(\cdot,w)$
is a entropy--entropy flux pair for each $w\in\R$ and 
\begin{equation}
\label{eq:bd-ent}
  S(w,w) = Q(w,w) = \partial_vS(v,w)|_{v=w} = 0, \quad \forall\:w\in\R. 
\end{equation}
The boundary conditions in \eqref{eq:b0} are then given by 
\begin{equation}
\label{eq:b5}
  \begin{split}
    &\esslim_{r\to0+}  \int_0^\infty Q(v(t,r), \rho_-(t)) \beta(t)dt \le 0,\\
    &\esslim_{r\to0+}  \int_0^\infty Q(v(t,1-r), \rho_+(t)) \beta(t)dt \ge 0, 
  \end{split}
\end{equation}
for any boundary flux $Q$ and $\beta \in C_0(\R)$ such that $\beta\ge0$.
Later on it has been proven that entropy solution of \eqref{eq:b0} has
strong traces at the boundaries even for initial condition in $L^\infty$
(cf. \cite{Vasseur01,Panov05,KV07}), so that the Bardos--LeRoux--N\'ed\'elec
boundary conditions still hold. Nevertheless,
boundary entropy--entropy flux pairs are useful in our proof of the quasi-static limit.

The entropy solution $v$ of \eqref{eq:b0} introduced above can be obtained through the \emph{viscous approximation}. 
For $\delta>0$, let $v^\delta=v^\delta(t,x)$ be the classical solution of the viscous problem 
\begin{equation}
\label{eq:viscous0}
  \left\{
  \begin{aligned}
    &\partial_tv^\delta + \partial_xJ(v^\delta) = \delta\partial_{xx}v^\delta, \quad t>0, \ x\in(0,1), \\
    &v^\delta(\cdot,0) = \rho_-, \quad v^\delta(\cdot,1) = \rho_+, \quad v^\delta(0,\cdot) = v_{0,\delta}, 
  \end{aligned}
  \right.
\end{equation}
where the mollified initial value $v_{0,\delta} \in C^\infty([0,1])$ satisfies that 
\begin{equation}
\label{eq:moll}
  \lim_{\delta\to0+} \int_0^1 |v_{0,\delta}(x)-v_0(x)|dx = 0 
\end{equation}
and the compatibility conditions 
\begin{equation}
  v_{0,\delta}(0,0) = \rho_-(0), \quad v_{0,\delta}(0,1) = \rho_+(0). 
\end{equation}
By \cite[Theorem 8.20]{Malek}, $v^\delta \to v$ in $C([0,T],L^1[0,1])$ for each $T>0$. 

\section{Quasi-static evolution}
\label{sec:quasi}

\subsection{The quasi-static equation}
\label{sec:quasi-stat-equat}

For $\ve>0$, let $u^\ve \in L^\infty(\R_+\times [0,1])$ be the entropy solution of  
\begin{equation}
\label{eq:q0}
  \left\{
  \begin{aligned}
    &\ve\partial_t u^\ve + \partial_xJ(u^\ve) = 0, \\
    &u^\ve(t,0) = \rho_-(t), \quad u^\ve(t,1) = \rho_+(t), \quad u^\ve(0,x) = u_0(x), 
  \end{aligned}
  \right.
\end{equation}
in the sense of \eqref{eq:b1}, \eqref{eq:b2} and \eqref{eq:b5}. 

Our aim is to prove that, as $\ve\to 0$,
the entropy solution $u^\ve$ of \eqref{eq:q0} converge to some $u \in L^\infty$
that is the entropy solution of the \emph{quasi-static conservation law} 
\begin{equation}
\label{eq:q1}
  \partial_xJ(u) = 0, \quad u(t,0) = \rho_-(t), \quad u(t,1) = \rho_+(t). 
\end{equation}
We assume now that $\rho_\pm(t) \in C^1(\R_+)$.
There is a physical reason for such assumption, as this \emph{macroscopic}
changes at the boundaries should be \emph{slow} and \emph{smooth}.
Also we need such condition in the proof of the quasi-static limit
(see proof of Proposition \ref{prop:bd-ent-prod}).

The entropy solution of the quasi-static problem \eqref{eq:q1}
is defined as a function $u \in L^\infty([0,+\infty)\times[0,1])$ such that,
for any $\vf \in C_0^\infty((0,+\infty)\times(0,1))$,
\begin{equation}
\label{eq:wqs}
  \int_0^\infty \int_0^1 J(u)\partial_x\vf\,dx\,dt = 0.
\end{equation}
Furthermore for a flux function $Q$ associated to a convex entropy $S$,
\begin{equation}
\label{eq:q4}
  \int_0^\infty \int_0^1 Q(u)\partial_x\vf\,dx\,dt \ge 0, \quad \forall\,\varphi \in C_0^\infty((0,+\infty)\times(0,1)),\ \varphi\ge0,
\end{equation}
while the boundary conditions are satisfied in the same sense
as in \eqref{eq:b5} with respect to a boundary entropy flux $Q(v,w)$.
Notice the difference with respect \eqref{eq:b2}: quasi-static solutions
are determined by the boundary conditions $\rho_\pm(t)$,
there is no need to specify an initial condition.

Observe from \eqref{eq:current} that the current function $J$ reaches its maximum at some unique $m\in(0,1)$. 
Moreover, for any $y\in[0,J(m)]$ the equation $J(u)=y$ has two solutions:
$u_1(y)\in[0,m]$ and $u_2(y)\in[m,1]$. 

\begin{proposition}
\label{BV}
  Let $u(t,x)$ be $L^\infty$ entropy solution of  \eqref{eq:q1}.
  Then there exists $z_1(t)\in[0,m]$, $z_2(t)\in[m,1]$ such that $J(z_1(t))=J(z_2(t))$ and 
  \begin{equation}
  \label{eq:q3}
    u(t,x) \in \{z_1(t),z_2(t)\}, \quad (t,x) -\text{a.s.}
  \end{equation}
  Furthermore, or $u(t,x)$ is a.s. constant in $x\in(0,1)$ for almost every $t$,
  or there is at most one upward jump from $z_1(t)$ to $z_2(t)$ inside $(0,1)$.
  In particular $u(t,\cdot)$ is of bounded variation for a.e. $t$.
\end{proposition}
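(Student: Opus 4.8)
The plan is to fix the time variable and reduce the two requirements \eqref{eq:wqs} and \eqref{eq:q4} to one-dimensional statements in $x$ holding for almost every $t$. Choosing in \eqref{eq:wqs} test functions of product form $\varphi(t,x)=\alpha(t)\psi(x)$ with $\alpha\in C_0^\infty((0,\infty))$ and $\psi\in C_0^\infty((0,1))$, and letting $\alpha$ vary, I would obtain $\int_0^1 J(u(t,x))\psi'(x)\,dx=0$ for every $\psi$ and a.e.\ $t$; running $\psi$ over a countable dense family then shows that, for a.e.\ $t$, the distributional derivative $\partial_x J(u(t,\cdot))$ vanishes, so $J(u(t,x))$ is a.e.\ equal to a constant $\mathcal J(t)\in[0,J(m)]$. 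Setting $\mathcal J(t)=\int_0^1 J(u(t,x))\,dx$ makes this $t$-measurable. Since $J$ is strictly concave with $J(0)=J(1)=0$ and maximum at $m$, the level set $\{J=\mathcal J(t)\}$ consists of exactly the two points $z_1(t)=u_1(\mathcal J(t))\in[0,m]$ and $z_2(t)=u_2(\mathcal J(t))\in[m,1]$, and a Fubini argument gives \eqref{eq:q3}.

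For the jump structure I would feed the same product test functions, now with $\alpha,\psi\ge 0$, into the entropy inequality \eqref{eq:q4}, using the single strictly convex entropy $S(v)=v^2/2$, whose flux is $Q(v)=\int_0^v wJ'(w)\,dw=vJ(v)-\int_0^v J(w)\,dw$. This yields $\int_0^1 Q(u(t,x))\psi'(x)\,dx\ge 0$ for all such $\psi$ and a.e.\ $t$, that is $\partial_x Q(u(t,\cdot))\le 0$ as a distribution. Hence, for a.e.\ $t$, $Q(u(t,\cdot))$ coincides a.e.\ with a genuine non-increasing function of $x$; in particular it is of bounded variation, and so will $u(t,\cdot)$ be once the orientation of the jump is fixed.

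That orientation comes from the key computation $Q(z_1(t))>Q(z_2(t))$ whenever $z_1(t)<z_2(t)$. Using $J(z_1)=J(z_2)=\mathcal J(t)$ in the explicit formula for $Q$,
\begin{equation*}
  Q(z_1)-Q(z_2)=\mathcal J(t)\,(z_1-z_2)+\int_{z_1}^{z_2}J(w)\,dw,
\end{equation*}
and by strict concavity $J(w)>\mathcal J(t)$ for $w\in(z_1,z_2)$, so $\int_{z_1}^{z_2}J(w)\,dw>\mathcal J(t)(z_2-z_1)$ and therefore $Q(z_1)-Q(z_2)>0$. Because $u(t,\cdot)$ takes only the values $z_1(t),z_2(t)$ and $Q$ is injective on these two points, reversing their order, the monotonicity $\partial_x Q(u(t,\cdot))\le0$ forces a threshold $x_0(t)\in[0,1]$ with $u(t,\cdot)=z_1(t)$ on $(0,x_0(t))$ and $u(t,\cdot)=z_2(t)$ on $(x_0(t),1)$, up to null sets. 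If $x_0(t)\in(0,1)$ this is a single upward jump from $z_1(t)$ to $z_2(t)$; if $x_0(t)\in\{0,1\}$, or if $\mathcal J(t)=J(m)$ so that $z_1(t)=z_2(t)=m$, the solution is constant in $x$. This establishes the dichotomy and the bounded variation claim.

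The main obstacle I anticipate is not a single computation but the careful disintegration in $t$: passing from the space--time weak and entropy formulations to statements valid for a.e.\ fixed $t$ (via product test functions, a countable dense family of $\psi$'s, and the fact that a distribution of one sign is a signed monotone measure), together with the joint measurability needed for the $(t,x)$-a.s.\ conclusion in \eqref{eq:q3}. The conceptually decisive step, however, is the entropy computation above, where \emph{strict} concavity of $J$ produces the strict inequality $Q(z_1)>Q(z_2)$ that forbids downward jumps and pins the shock to be upward.
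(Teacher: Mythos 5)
Your proof is correct and follows essentially the same route as the paper's: deduce from the weak formulation that $J(u(t,\cdot))$ is a.e.\ constant in $x$, so $u$ takes only the two values $z_1(t),z_2(t)$, then use the entropy condition $\partial_x Q(u(t,\cdot))\le 0$ together with the comparison $Q(z_1)\ge Q(z_2)$ to rule out downward jumps and force a single upward shock. The only (harmless) differences are that you work with the specific entropy $S(v)=v^2/2$ instead of a general convex $S$ and that you spell out the disintegration in $t$, which the paper leaves implicit.
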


\begin{proof}
  Since $u(t,x)$ solves $\partial_xJ(u) = 0$ in the weak sense,
  there exists a bounded function $\mathcal J(t)$ such that
  $J(u(t,x)) = \mathcal J(t)$ almost surely in $(t,x)$. 
  Due to \eqref{eq:current}, we can find $z_1(t) \le m \le z_2(t)$ 
  such that $J(z_1(t)) = J(z_2(t)) = \mathcal J(t)$, and \eqref{eq:q3} thus follows.

  The entropy condition
  \eqref{eq:q4} yields that $\partial_xQ(u(t,x))$
  is negative in the sense of distribution.
  Observe that for any $z_1\in[0,m]$, $z_2\in[m,1]$ s.t. $J(z_1)=J(z_2)=J_0 \in [0,J(m)]$, 
  \begin{align*}
    Q(z_1)-Q(z_2) &= \int_{z_1}^{z_2} Q'(u)du 
    = \int_{z_1}^m S'(u)J'(u)du + \int_m^{z_2} S'(u)J'(u)du \\
    &= \int_{J_0}^{J(m)} S'(u_1(y))dy + \int_{J(m)}^{J_0} S'(u_2(y))dy \\
    &= -\int_{J_0}^{J(m)} \int_{u_1(y)}^{u_2(y)} S''(v)dv\,dy \le 0, 
  \end{align*}
  as $S$ is convex. 
  Hence, only upward jumps from $z_1(t)$ to $z_2(t)$ can decrease 
  the entropy flux $Q(u(t,x))$. 
  This implies that we can have at most one such jump inside $(0,1)$.
\end{proof}

Since, by Proposition \ref{BV},
entropy solution must be of bounded variation,
then the boundary conditions are satified in the Bardos--LeRoux--N\'ed\'elec
sense given by \eqref{eq:boundary1} and \eqref{eq:boundary2}.

For $u\in[0,1]\backslash\{m\}$, let $u^*\in[0,1]\backslash\{m\}$ be such that $J(u^*)=J(u)$. 
Furthermore we fix $u^*=m$ for $u=m$.

Define the \emph{critical segment}
\begin{align}
\label{eq:critical-line}
  \Theta = \{(z,z^*)\in[0,1]^2;\ z<m\}. 
\end{align}
The entropy solution of \eqref{eq:q1} is unique outside $\Theta$
and it can be calculated explicitly as below. 

\begin{proposition}
\label{prop:out}
  Suppose that $(\rho_-(t),\rho_+(t))\notin\Theta$ for almost every $t\ge0$.
  Then \eqref{eq:q1} has a unique entropy solution $u(t,x)$ given by 
  \begin{equation}
  \label{eq:q2}
    u(t,x) = 
    \begin{cases}
      \rho_-(t), &\text{if}\ \rho_-(t) < m,\ \rho_+(t) < \rho_-^*(t), \\
      \rho_+(t), &\text{if}\ \rho_+(t) > m,\ \rho_-(t) > \rho_+^*(t), \\
      m, &\text{if}\ \rho_-(t) \ge m,\ \rho_+(t) \le m. 
    \end{cases}
  \end{equation}
\end{proposition}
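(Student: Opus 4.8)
The plan is to combine Proposition \ref{BV} with an explicit reading of the Bardos--LeRoux--N\'ed\'elec conditions \eqref{eq:boundary1}--\eqref{eq:boundary2}. By Proposition \ref{BV}, for a.e.\ $t$ the profile $u(t,\cdot)$ is of bounded variation, takes at most the two values $z_1(t)\le m\le z_2(t)$ with $J(z_1)=J(z_2)=\mathcal J(t)$, and is either spatially constant or exhibits a single upward jump from $z_1$ to $z_2$. Correspondingly the boundary traces $v_-=\lim_{x\to0+}u$ and $v_+=\lim_{x\to1-}u$ equal the common constant in the first case, and equal $z_1$ and $z_2$ respectively in the second. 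Thus it suffices to determine, for each $(\rho_-,\rho_+)\notin\Theta$, which of these two profile types satisfies the boundary conditions and with which value, and then to note that existence is immediate (a spatial constant trivially satisfies \eqref{eq:wqs} and \eqref{eq:q4}) while uniqueness follows once the admissible profile is singled out for a.e.\ $t$.

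First I would convert the BLN inequalities into explicit admissibility sets for the traces, using that $J$ is unimodal with peak at $m$ (increasing on $[0,m]$, decreasing on $[m,1]$). A short case analysis on the position of $v_\pm$ and $\rho_\pm$ relative to $m$ yields the following: at the left boundary, $v_-=\rho_-$ or $v_-\ge\rho_-^*$ when $\rho_-<m$, and $v_-\ge m$ when $\rho_-\ge m$; at the right boundary, $v_+=\rho_+$ or $v_+\le\rho_+^*$ when $\rho_+>m$, and $v_+\le m$ when $\rho_+\le m$. The mechanism here is that for a concave flux the extremum of $J$ over an interval sits at an endpoint (or at $m$), so each BLN inequality reduces to a monotonicity comparison on the appropriate branch of $J$.

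Next I would feed the two profile types into these sets. For the single--jump profile one has $v_-=z_1<m<z_2=v_+$; I would show that in each of the three regions defining \eqref{eq:q2} this forces a contradiction between the left and right admissibility sets. For instance, when $\rho_-\ge m$ the trace $v_-<m$ already violates the left condition, while when $\rho_-<m$ the left condition forces $z_1=\rho_-$, hence $v_+=\rho_-^*$, which is then incompatible with the right condition under $\rho_+<\rho_-^*$; so no jump is admissible. For the constant profile $v_-=v_+=c$ the left and right sets intersect in exactly one value, which I would check equals $\rho_-$ in the first case, $\rho_+$ in the second, and $m$ in the third. A final bookkeeping step verifies that the three regions in \eqref{eq:q2} are mutually disjoint and together exhaust $[0,1]^2\setminus\Theta$; here the comparisons $\rho_+<\rho_-^*$ and $\rho_->\rho_+^*$ are translated into $J(\rho_+)>J(\rho_-)$ and $J(\rho_-)>J(\rho_+)$, which immediately gives both the exclusivity and the coverage.

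The main obstacle I expect is the trace analysis of the second paragraph: the BLN conditions are sign--sensitive, and reducing them to the clean trace inequalities requires keeping track of several subcases according to whether $\rho_\pm$ and the candidate trace lie on the increasing or decreasing branch of $J$, together with the degenerate endpoints $\rho_\pm=m$. Once these trace characterizations are established, excluding the jump and pinning down the unique constant value is routine casework.
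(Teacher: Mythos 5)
Your proposal is correct and follows essentially the same route as the paper: invoke Proposition \ref{BV} to reduce to two-valued BV profiles with well-defined traces, rewrite the Bardos--LeRoux--N\'ed\'elec conditions as the same explicit admissibility sets for $u_\pm(t)$, and conclude by casework (the paper chains the inequalities via $u_-\le u_+$ rather than splitting into constant versus single-jump profiles, but this is only an organizational difference). No gaps.
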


\begin{proof}
  We have to specify $z_1(t)$ through the boundary values $\rho_\pm(t)$. 
  From the argument above, $u(t,\cdot)$ has bounded total variation
  for each $t$, hence 
  \begin{equation}
    u_-(t) = \lim_{x\to0+} u(t,x), \quad u_+(t) = \lim_{x\to0-} u(t,x) 
  \end{equation}
  are well-defined. 
  Furthermore, $u_\pm(t) \in \{z_1(t), z_1^*(t)\}$ and $u_-(t) \le u_+(t)$. 
  Rewrite \eqref{eq:boundary1} and \eqref{eq:boundary2} explicitly as 
  \begin{equation}
    \begin{split}
      &\rho_-(t) < m \Rightarrow u_-(t) = \rho_- \text{ or }  u_-(t) \in [\rho_+^*(t), 1], \\
      &\rho_-(t) \ge m \Rightarrow u_-(t) \ge m, \\
      &\rho_+(t) \le m \Rightarrow u_+(t) \le m, \\
      &\rho_+(t) > m \Rightarrow u_+(t) = \rho_+ \text{ or } u_+(t) \in [0, \rho_+^*(t)]. 
    \end{split}
  \end{equation}
If $\rho_- \ge m$, $\rho_+ \le m$, then $u_-=u_+=m$ so that $u(t,x)=m$. 
If $\rho_-<m$, $\rho_+<\rho_-^*$, 
then $u_- \le u_+ \le \max\{m, \rho_+\} < \rho_-^*$, so that $u_-=\rho_-$. 
In view of \eqref{eq:q3}, we have $z_1(t)=\rho_-$ and $u_+=\rho_-$, hence $u(t,x) = \rho_-$. 
The case in which $\rho_+>m$, $\rho_->\rho_+^*$ is proved similarly. 
\end{proof}

\begin{remark}
\label{critical}
  If $(\rho_-(t),\rho_+(t))\in\Theta$ for an interval of time of positive measure,
  then the entropy solution is not unique,
  but for any solution there exists one single shock
  with position $X(t)$ such that $u(t,x) = \rho_-(t)$ for $x < X(t)$
  and $u(t,x) = \rho_+(t) = \rho_-^*(t)$ for $x> X(t)$. 
\end{remark}

\begin{remark}
\label{rem:current}
The entropy solution can also be characterized as the solution of the following
variational problem:
\begin{align}\label{eq:var}
  \mathcal J(t) = 
  \begin{cases}
    \sup\,\{J(\rho); \rho \in [\rho_+(t), \rho_-(t)]\}, &\text{if } \rho_-(t) \ge \rho_+(t), \\
    \inf\,\{J(\rho); \rho \in [\rho_-(t), \rho_+(t)]\}, &\text{if } \rho_-(t) < \rho_+(t). 
  \end{cases}
\end{align}
This also includes the critical line $(\rho_-(t),\rho_+(t)) \in \Theta$,
where $\mathcal J = J(\rho_-) = J(\rho_+)$ minimizes the current $J(\rho)$ in the interval
$[\rho_-, \rho_+]$. 
\end{remark}

\subsection{The quasi-static limit}
\label{sec:quasi-static-limit}

\begin{theorem}
\label{thm:quasi}
Suppose that $\rho_\pm \in C^1(\R_+)$ and
$(\rho_-(t),\rho_+(t)) \notin \Theta$ for almost all $t$, 
then the solution $u^\ve$ of \eqref{eq:q0} converges to $u = u(t,x)$ defined in \eqref{eq:q2}
with respect to the weak-$\star$ topology of $L^\infty([0,T]\times[0,1])$ for all $T>0$. 
\end{theorem}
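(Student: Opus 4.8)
The plan is to pass to the weak-$\star$ limit and identify it through the characterization of Proposition \ref{prop:out}. The maximum principle for the viscous problem \eqref{eq:viscous0} gives $u^\ve(t,x)\in[0,1]$, so $\{u^\ve\}$ is bounded in $L^\infty$; I extract a subsequence $u^\ve\wstar u$ and introduce the associated Young measure $\nu_{t,x}$ on $[0,1]$, so that $f(u^\ve)\wstar\langle\nu_{t,x},f\rangle$ for every $f\in C([0,1])$ and $u(t,x)=\langle\nu_{t,x},\mathrm{id}\rangle$. Testing the weak form of \eqref{eq:q0} against $\phi\in C_0^\infty((0,\infty)\times(0,1))$ and sending $\ve\to0$, the two terms carrying a factor $\ve$ vanish by boundedness, leaving $\int_0^\infty\!\!\int_0^1\langle\nu_{t,x},J\rangle\,\partial_x\phi=0$; hence $\langle\nu_{t,x},J\rangle=\mathcal J(t)$ is independent of $x$, which is \eqref{eq:wqs}. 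The same passage in the scaled entropy inequality \eqref{eq:b2} gives $\partial_x\langle\nu_{t,x},Q\rangle\le0$ for every convex entropy pair, i.e. \eqref{eq:q4}.

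The delicate point is to transfer the boundary conditions \eqref{eq:b5} to $\nu$, and this is where the hypothesis $\rho_\pm\in C^1$ enters. Working with Otto's boundary entropy--entropy flux pairs $(S,Q)(v,w)$ and the boundary entropy production bound of Proposition \ref{prop:bd-ent-prod}, I would show that the contribution of the $\ve\partial_t$ term to the boundary flux is controlled uniformly, so that in the limit
\begin{equation*}
  \esslim_{r\to0+}\int_0^\infty\langle\nu_{t,r},Q(\cdot,\rho_-(t))\rangle\beta(t)\,dt\le0,
  \qquad
  \esslim_{r\to0+}\int_0^\infty\langle\nu_{t,1-r},Q(\cdot,\rho_+(t))\rangle\beta(t)\,dt\ge0
\end{equation*}
for all admissible $\beta\ge0$. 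At this stage $\nu$ is a measure-valued entropy solution of the quasi-static problem with spatially constant current $\mathcal J(t)$.

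It remains to show that this measure-valued solution is the Dirac mass carried by the explicit profile \eqref{eq:q2}; this is the heart of the argument and amounts to a measure-valued version of Proposition \ref{prop:out}. Where \eqref{eq:q2} prescribes the value $m$ one has $\mathcal J(t)=J(m)=\max_{[0,1]}J$, and since $J\le J(m)$ the nonnegative quantity $J(m)-J(u^\ve)$ has vanishing weak-$\star$ limit and therefore tends to $0$ in $L^1$; thus $\nu_{t,x}=\delta_m$ and $u=m$. In the two remaining cases $\mathcal J(t)<J(m)$, so the constraint on the current no longer forces concentration, and I would instead invoke the boundary inequalities of the second paragraph together with the hypothesis $(\rho_-(t),\rho_+(t))\notin\Theta$: the monotonicity $\partial_x\langle\nu,Q\rangle\le0$, evaluated on the entropies used in the proof of Proposition \ref{BV}, rules out any interior transition except a single admissible one, and the boundary conditions then select the root $\rho_-(t)$ or $\rho_+(t)$ dictated by \eqref{eq:q2}, collapsing $\nu_{t,x}$ onto it. Finally, since the limit is uniquely determined it does not depend on the extracted subsequence, and a standard argument promotes the convergence to the whole family $u^\ve$.

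The main obstacle is the boundary step of the second paragraph. In the interior the relation $\partial_xJ(u^\ve)=-\ve\partial_tu^\ve$ makes the current asymptotically $x$-independent almost for free, but near $x=0,1$ the traces of $u^\ve$ are not a priori stable, and without the smoothness of $\rho_\pm$ the boundary layer could survive in the limit and select a wrong branch of $J^{-1}(\mathcal J)$; it is precisely the $C^1$ regularity, through the entropy production estimate of Proposition \ref{prop:bd-ent-prod}, that suppresses this layer and makes the limiting boundary conditions \eqref{eq:b5} valid.
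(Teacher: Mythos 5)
Your overall architecture (uniform $L^\infty$ bound, Young-measure limit, passage to the limit in the weak form and in the entropy inequality, then identification of $\nu_{t,x}$) matches the paper's, but the step you yourself call ``the heart of the argument'' --- collapsing the measure-valued solution onto the profile \eqref{eq:q2} --- is not actually carried out, and the route you sketch for it does not work as stated. First, in the case $\rho_-\ge m\ge\rho_+$ you start from ``$\mathcal J(t)=J(m)$'', but that identity is precisely what has to be proved: a priori $\mathcal J(t)=\langle\nu_{t,x},J\rangle$ is only some $x$-independent number, and nothing in your first two paragraphs determines its value. Second, for the remaining two cases you appeal to a ``measure-valued version of Proposition \ref{prop:out}'', but Propositions \ref{BV} and \ref{prop:out} are statements about $L^\infty$ \emph{functions}; for a Young measure the relation $\langle\nu_{t,x},J\rangle=\mathcal J(t)$ does not force $\nu_{t,x}$ to be supported on the two-point set $J^{-1}(\mathcal J(t))$, so the ``at most one upward jump'' dichotomy has no direct analogue, and boundary-trace inequalities alone cannot select the branch in the interior.

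The missing idea is that, in the quasi-static scaling, the boundary entropy production inequalities hold in the \emph{whole domain}, not merely as traces at $x=0$ and $x=1$: Proposition \ref{prop:bd-ent-prod} asserts $\bar Q_-(t,x)\le0$ and $\bar Q_+(t,x)\ge0$ for a.e.\ $(t,x)\in\Om_T$. This is obtained because the test function in the entropy inequality for $Q(\cdot,\rho_-(t))$ need only vanish at $x=1$ --- the boundary term at $x=0$ vanishes exactly since $u^{\ve,\delta}(\cdot,0)=\rho_-$ and $Q(w,w)=\partial_uS(w,w)=0$ --- while the $\ve\partial_t$ and $\ve\partial_wS\,\rho_-'$ terms disappear in the limit; this is where $\rho_\pm\in C^1$ and the a priori bound of Proposition \ref{prop:priori} are actually used. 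With the global inequalities in hand the identification is concrete: the nonnegative boundary flux $Q(u,w)=\big(J(w\wedge m)-J(u)\big)\mathbf 1_{\{u<w\wedge m\}}$ together with $\bar Q_-\le0$ forces $\nu_{t,x}$ to concentrate on $[\rho_-(t)\wedge m,1]$, the symmetric entropy at the right boundary forces it onto $[0,\rho_+(t)\vee m]$, and the Kru\v{z}kov pair $S_*(u,w)=|u-w|$, $Q_*(u,w)=\mathrm{sign}(u-w)(J(u)-J(w))$ then pins $\nu_{t,x}$ to the single point prescribed by \eqref{eq:q2}; in particular $\mathcal J(t)=J(m)$ in case 3 is a consequence, not a hypothesis. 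You should replace your third paragraph by this explicit choice of entropies and upgrade the second from boundary-trace inequalities to the interior ones.
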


\begin{remark}
\label{rem:current1}
As $\ve \to 0$, $J(u^\ve(t)) \wstar \mathcal J(t)$ given by \eqref{eq:var}. 
Particularly, in the case $(\rho_-(t),\rho_+(t)) \in \Theta$ we can prove that 
$u^\ve$ converges weakly-$\star$ to a Young measure concentrated on $\{\rho_\pm(t)\}$,  thus $J(u^\ve) \wstar \mathcal J = J(\rho_-) = J(\rho_+)$. 
See Remark \ref{rem-crit} at the end of the section. 
\end{remark}

\begin{remark}
\label{initialcond}
  Notice that the quasi-static limit in Theorem \ref{thm:quasi} does not
  depend on the initial condition $u_0$ for $u^\ve$.
\end{remark}

\begin{example}
  Consider the current function $J(u)=u(1-u)$ in \eqref{eq:introq0}. 
  Proposition \ref{prop:out} and Theorem \ref{thm:quasi} hold in this case with $m=1/2$ and $u^*=1-u$. 
  
  On the other hand, let $v^\delta=v^\delta(t,x)$ be the classical solution of the quasi-static problem associated to the viscous equation \eqref{eq:viscous0}: 
  \begin{equation}
    \partial_xJ(v^\delta) = \delta\partial_{xx}v^\delta, \quad
    v^\delta(t,0)=\rho_-(t), \quad v^\delta(t,1)=\rho_+(t). 
  \end{equation}
  When $(\rho_-(t),\rho_+(t))\notin\Theta$, it is not hard to see that $v^\delta$ also converges pointwisely to the solution $u$ of quasi-static problem given by \eqref{eq:q2}: 
  \begin{equation}
    \lim_{\delta\to0+} v^\delta(t,x) = u(t,x), \quad \forall x\in(0,1), 
  \end{equation}
  and the convergence is uniform on $[\gamma,1-\gamma]$ for any $\gamma>0$. 
  On the critical line $(\rho_-(t),\rho_+(t))\in\Theta$, $v^\delta$ is explicitly given by 
  \begin{equation}
    v^\delta(t,x)=\frac12+\delta C(\delta,t)\tanh \left[ C(\delta,t) \left( x-\frac12 \right) \right], 
  \end{equation}
  where $C=C(\delta,t)$ is such that $C\tanh(C/2)=\delta^{-1}(2\rho_+(t)-1)$. 
  Then $v^\delta$ converges pointwisely to the profile with an upward shock at $1/2$: 
  \begin{equation}
    \lim_{\delta\to0+} v^\delta(t,x) = \rho_-(t)\mathbf1_{[0,\frac12)}(x) + \rho_+(t)\mathbf1_{(\frac12,1]}(x), \quad \forall x\in[0,1], 
  \end{equation}
  and the convergence is uniform on any closed interval excludes $1/2$. 
\end{example}

\section{Proof of Theorem \ref{thm:quasi}}

For $\ve > 0$, $\delta > 0$, consider viscous approximation of \eqref{eq:q0} given by 
\begin{equation}
\label{eq:viscous}
  \left\{
  \begin{aligned}
    &\ve\partial_tu^{\ve,\delta} + \partial_x J(u^{\ve,\delta}) = \delta\partial_{xx}u^{\ve,\delta}, \quad t>0,\,x\in(0,1), \\
    &u^{\ve,\delta}(t,0)=\rho_-(t), \quad u^{\ve,\delta}(t,1)=\rho_+(t),
    \quad u^{\ve,\delta}(0,x)=u_{0,\delta}(x), 
  \end{aligned}
  \right.
\end{equation}
where $u_{0,\delta}$ is the mollified initial function satisfying \eqref{eq:moll} and the compatibility conditions. 
Let $u^{\ve,\delta}=u^{\ve,\delta}(t,x)$ be the classical smooth solution of \eqref{eq:viscous}. 
We first present a priori estimate for $\|\partial_xu^{\ve,\delta}\|_{L^2}$. 

\begin{proposition}
\label{prop:priori}
For any $t \ge 0$, there is a constant $C = C_t$ such that 
\begin{equation}
  \ve\int_0^1 u^{\ve,\delta}(t,x)^2dx + \delta\int_0^t \int_0^1 \big(\partial_x u^{\ve,\delta}(s,x)\big)^2dx\,ds \le C. 
\end{equation}
\end{proposition}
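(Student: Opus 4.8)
The plan is to obtain the estimate from a weighted energy identity for the classical solution $u^{\ve,\delta}$ of \eqref{eq:viscous}. First I would record the pointwise bound $0 \le u^{\ve,\delta}(t,x) \le 1$. Since $J(0)=J(1)=0$, the constants $0$ and $1$ are stationary solutions of \eqref{eq:viscous}, and because the initial and boundary data lie in $[0,1]$, the comparison principle for the uniformly parabolic quasilinear equation confines $u^{\ve,\delta}$ to $[0,1]$ for all times. In particular the first term is trivially controlled, $\ve\int_0^1 (u^{\ve,\delta})^2\,dx \le \ve$, so the whole content of the statement is the bound, uniform in $\delta$, on the dissipation $\delta\int_0^t\int_0^1(\partial_x u^{\ve,\delta})^2$.

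To obtain it I would avoid multiplying the equation by $u^{\ve,\delta}$ directly, since the naive energy estimate produces the boundary contribution $\delta(\rho_+\partial_x u^{\ve,\delta}|_{x=1}-\rho_-\partial_x u^{\ve,\delta}|_{x=0})$ that we do not control. Instead I would lift the inhomogeneous boundary data by the linear interpolation $\ell(t,x)=\rho_-(t)+x(\rho_+(t)-\rho_-(t))$ and set $w=u^{\ve,\delta}-\ell$, so that $w(t,0)=w(t,1)=0$ and $\partial_{xx}\ell\equiv 0$. Then $w$ solves $\ve\partial_t w=-\ve\partial_t\ell-\partial_x J(u^{\ve,\delta})+\delta\partial_{xx}w$. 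Multiplying by $w$ and integrating over $[0,1]$, every boundary term vanishes because $w$ is zero at the endpoints, which yields the clean identity
\[
  \frac{\ve}{2}\frac{d}{dt}\int_0^1 w^2\,dx+\delta\int_0^1(\partial_x w)^2\,dx=-\ve\int_0^1 w\,\partial_t\ell\,dx+\int_0^1 J(u^{\ve,\delta})\,\partial_x w\,dx.
\]

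The crux, and the step I expect to be the main obstacle, is the flux term $\int_0^1 J(u^{\ve,\delta})\,\partial_x w\,dx$, because a crude Cauchy--Schwarz bound would generate a factor $\delta^{-1}$ and ruin the estimate. The key observation is that, writing $\partial_x w=\partial_x u^{\ve,\delta}-(\rho_+-\rho_-)$ and introducing the primitive $\Phi(r)=\int_0^r J(s)\,ds$, one has $\int_0^1 J(u^{\ve,\delta})\,\partial_x u^{\ve,\delta}\,dx=\Phi(\rho_+)-\Phi(\rho_-)$, so the flux term collapses to $\Phi(\rho_+)-\Phi(\rho_-)-(\rho_+-\rho_-)\int_0^1 J(u^{\ve,\delta})\,dx$, which is bounded uniformly in $\ve,\delta$ thanks to $u^{\ve,\delta}\in[0,1]$ and $J\in C^2$. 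Integrating the identity over $[0,t]$, dropping the nonnegative term $\tfrac{\ve}{2}\int_0^1 w(t)^2\,dx$, and estimating the remaining contributions by $\tfrac{\ve}{2}\int_0^1 w(0)^2\,dx\le\tfrac{\ve}{2}$, by $\ve\int_0^t\int_0^1|w|\,|\partial_s\ell|\,dx\,ds\le\ve\int_0^t(|\rho_-'|+|\rho_+'|)\,ds$ (here the hypothesis $\rho_\pm\in C^1$ enters), and by the bounded flux term, gives $\delta\int_0^t\int_0^1(\partial_x w)^2\le C_t$ with $C_t$ depending only on $t$, on $\|J\|_{L^\infty[0,1]}$ and on $\|\rho_\pm'\|_{L^\infty[0,t]}$. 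Finally, since $\partial_x u^{\ve,\delta}=\partial_x w+(\rho_+-\rho_-)$, the elementary inequality $(\partial_x u^{\ve,\delta})^2\le 2(\partial_x w)^2+2(\rho_+-\rho_-)^2$ transfers the bound to $\delta\int_0^t\int_0^1(\partial_x u^{\ve,\delta})^2$, which together with $\ve\int_0^1(u^{\ve,\delta})^2\le\ve$ completes the proof. Everything apart from the flux term is a routine parabolic energy computation.
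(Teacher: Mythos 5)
Your proof is correct, and it is a close cousin of the paper's argument rather than a truly independent one: both hinge on the linear interpolation $\psi(t,x)=\rho_-(t)+x(\rho_+(t)-\rho_-(t))$ of the boundary data and on the exact evaluation of the convective term through a primitive (the paper uses $G$ with $G'=uJ'(u)$ to get $\int_0^1 u\,\partial_xJ(u)\,dx=G(\rho_+)-G(\rho_-)$; you use $\Phi$ with $\Phi'=J$ to get $\int_0^1 J(u)\,\partial_xu\,dx=\Phi(\rho_+)-\Phi(\rho_-)$). The organizational difference is where $\psi$ enters: the paper multiplies \eqref{eq:viscous} by $u^{\ve,\delta}$ itself, which produces the uncontrolled boundary flux $\delta\big[\rho_+\partial_xu^{\ve,\delta}(s,1)-\rho_-\partial_xu^{\ve,\delta}(s,0)\big]$, and then tests the equation against $\psi$ a second time to express and absorb that flux via Young's inequality into the $\ve\int (u^{\ve,\delta})^2$ and $\delta\iint(\partial_xu^{\ve,\delta})^2$ terms. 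You instead homogenize first, multiplying by $w=u^{\ve,\delta}-\psi$, so the offending boundary terms cancel identically and no absorption step is needed; the price is that you must invoke the comparison principle to control $\ve\int(u^{\ve,\delta})^2$ separately (the paper gets that term for free from its own energy identity), and you must undo the shift at the end via $(\partial_xu^{\ve,\delta})^2\le 2(\partial_xw)^2+2(\rho_+-\rho_-)^2$. Both routes use $\rho_\pm\in C^1$ in the same place (the $\ve\iint w\,\partial_s\psi$ term), and both yield a constant uniform in $\ve,\delta\in(0,1]$, which is all that is needed. Your version is arguably cleaner to verify since every boundary contribution vanishes exactly; the paper's version is the more standard ``estimate the boundary flux by a second test function'' computation.
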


\begin{proof}
Denote by $G(u)$ a primitive of $uJ'(u)$: $G'(u)=uJ'(u)$.
Multiply \eqref{eq:viscous} by $u^{\ve,\delta}$ and integrate over $(0,t)\times(0,1)$ to obtain
\begin{equation}
\label{eq:eninter}
  \begin{aligned}
    &\frac \ve 2  \int_0^1 u^{\ve,\delta}(t,x)^2dx-\frac \ve 2 \int_0^1 u_{0,\delta}(x)^2dx + \int_0^t \big[G(\rho_+(s))- G(\rho_-(s))\big]ds \\
    =\ &\delta\int_0^t \big[\rho_+(s)\partial_x u^{\ve,\delta}(s,1) - \rho_-(s)\partial_x u^{\ve,\delta}(s,0)\big]ds - \delta\iint (\partial_x u^{\ve,\delta})^2dx\,ds. 
  \end{aligned}
\end{equation}
In order to estimate the last line of \eqref{eq:eninter} we test \eqref{eq:viscous} against $\psi(s,x) := \rho_-(s)+x[\rho_+(s)-\rho_-(s)]$, obtaining that 
\begin{equation*}
  \begin{aligned}
    &\ve\int_0^1 \big[\psi(t,x)u^{\ve,\delta}(t,x) - \psi(0,x)u_{0,\delta}(x)\big]dx -\ve \iint u^{\ve,\delta}\partial_s\psi\,dx\,ds \\
    &+\int_0^t \big[J(\rho_+(s))\rho_+(s) - J(\rho_-(s))\rho_-(s)\big]ds - \iint J(u^{\ve,\delta})\partial_x\psi\,dx\,ds\\
    =\ &\delta\int_0^t \big[\rho_+(s) \partial_xu^{\ve,\delta}(s,1)-\rho_-(s) \partial_xu^{\ve,\delta}(s,0)\big] ds - \delta\iint \partial_xu^{\ve,\delta}\partial_x\psi\,dx\,ds. 
  \end{aligned}
\end{equation*}
Then, Young inequality allows to estimate
\begin{equation*}
  \begin{aligned}
    &\left|\delta\int_0^t \big[\rho_+(s)\partial_xu^{\ve,\delta}(s,1)-\rho_-(s) \partial_xu^{\ve,\delta}(s,0)\big]ds \right| \\
    \le\ &C + \frac\ve4\int_0^1u^{\ve,\delta}(t,x)^2dx + \frac\delta2\int_0^t \int_0^1 (\partial_xu^{\ve,\delta}(s,x))^2dxds, 
  \end{aligned}
\end{equation*}
which, inserted into \eqref{eq:eninter} gives the conclusion.
\end{proof}

In the following we denote $\Om_T=[0,T]\times[0,1]$, $\Om=\R_+\times[0,1]$. 
As stated in \S\ref{sec:burger}, for each fixed $\ve>0$, 
%$u^{\ve,\delta} \wstar u^\ve$ as $\delta\to0^+$, i.e., 
\begin{equation*}
  \lim_{\delta\to 0} \iint \vf(t,x)F(t,x,u^{\ve,\delta}(t,x))dx\,dt
  = \iint \vf(t,x)F(t,x,u^\ve(t,x))dx\,dt, 
\end{equation*}
for all $F \in C(\Om_T\times [0,1])$ and $\vf \in L^1(\Om_T)$, 
where $u^\ve \in L^\infty(\Om_T)$ is the entropy solution of \eqref{eq:q0}. 
Observe that $u^\ve$ is uniformly bounded: $\|u^\ve\|_{L^\infty(\Om_T)} \le 1$. 
Therefore, we can extract a weakly-$\star$ convergent subsequence: 
\begin{equation*}
  \lim_{\ve_n\to0} \iint \vf(t,x)F(t,x,u^{\ve_n}(t,x)) dx\,dt =
  \iint \vf(t,x)\int_0^1 F(t,x,\la)\nu_{t,x}(d\la)dx\,dt
\end{equation*} 
where $\{\nu_{t,x}(d\lambda)\}_{(t,x)\in\Om_T}$ is the limit Young measure. 

It suffices to show that $\nu_{t,x}$ coincides with the delta measure concentrated on $u(t,x)$ given by \eqref{eq:q2}. 
To this end, given boundary entropy--entropy flux pair $(S,Q)$, define the \emph{boundary entropy production} 
\begin{equation}
  \bar Q_\pm(t,x) := \int Q(\lambda, \rho_\pm(t))\nu_{t,x}(d\lambda), \quad (t,x) \in \Om_T. 
\end{equation}
The following proposition is the key argument. 

\begin{proposition}
\label{prop:bd-ent-prod}
For any boundary entropy flux $Q$, 
\begin{equation}
  \bar Q_-(t,x) \le 0, \quad \bar Q_+(t,x) \ge 0, \quad (t,x)-\text{a.s.} 
\end{equation}
Moreover, $\partial_x\bar Q_\pm \le 0$ in the sense of distribution. 
\end{proposition}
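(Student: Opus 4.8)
The plan is to work at the viscous level and pass to the double limit $\delta\to0$ then $\ve\to0$. Fix a boundary entropy--entropy flux pair $(S,Q)$ and set $w=\rho_-(t)$ (the left boundary; the right one is symmetric). Writing $u=u^{\ve,\delta}$ and abbreviating $S=S(u,\rho_-(t))$, $Q=Q(u,\rho_-(t))$, I would differentiate along the flow: since $\rho_-\in C^1$ the chain rule gives $\partial_t S=\partial_v S\,\partial_t u+\partial_w S\,\dot\rho_-$, and using $\ve\partial_t u=-\partial_xJ(u)+\delta\partial_{xx}u$ together with $\partial_v Q=J'\partial_v S$ and $\partial_x(\partial_v S)=\partial_{vv}S\,\partial_x u$ one reaches
\begin{equation*}
\ve\,\partial_t S+\partial_x Q=\delta\,\partial_x\!\big(\partial_v S\,\partial_x u\big)-\delta\,\partial_{vv}S\,(\partial_x u)^2+\ve\,\partial_w S\,\dot\rho_-.
\end{equation*}
The term $-\delta\,\partial_{vv}S\,(\partial_x u)^2$ is nonpositive because $S(\cdot,w)$ is convex; this is the viscous entropy dissipation.

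Next I would integrate against a nonnegative test function $\vf=\beta(t)\eta(x)$, with $\beta\in C_0^\infty((0,\infty))$, $\beta\ge0$, and integrate by parts. The crucial point is the boundary normalization \eqref{eq:bd-ent}: since $u(t,0)=\rho_-(t)$ for the viscous solution, both $Q(u(t,0),\rho_-(t))=Q(\rho_-,\rho_-)=0$ and $\partial_v S(u(t,0),\rho_-(t))=\partial_v S(\rho_-,\rho_-)=0$ vanish, so even when $\eta$ does \emph{not} vanish at $x=0$ the boundary contributions from $\partial_x Q$ and from $\delta\,\partial_x(\partial_v S\,\partial_x u)$ disappear. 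Dropping the nonpositive dissipation yields
\begin{equation*}
-\ve\!\iint S\,\partial_t\vf-\iint Q\,\partial_x\vf\le-\delta\!\iint\partial_v S\,\partial_x u\,\partial_x\vf+\ve\!\iint\partial_w S\,\dot\rho_-\,\vf.
\end{equation*}
Proposition~\ref{prop:priori} and Cauchy--Schwarz bound the $\delta$-cross term by $C\sqrt\delta$, so letting $\delta\to0$ (using $u^{\ve,\delta}\to u^\ve$ in $C([0,T],L^1)$) kills it. Sending $\ve_n\to0$ afterwards, the two $\ve$-weighted integrals have bounded integrands --- here $\dot\rho_-\in L^\infty_{loc}$, which is exactly why $C^1$ is assumed --- so they vanish, while the Young-measure convergence turns $\iint Q(u^{\ve_n},\rho_-)\,\partial_x\vf$ into $\iint\bar Q_-\,\partial_x\vf$. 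The outcome is $\iint\bar Q_-\,\partial_x\vf\ge0$ for every $\beta\ge0$ and every $\eta\ge0$ vanishing near $x=1$, with $\eta$ unconstrained at $x=0$.

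Two specializations give both assertions. Taking $\eta$ supported in the interior recovers the distributional monotonicity $\partial_x\bar Q_-\le0$. Taking instead a tent $\eta_r$ with $\eta_r(0)=1$ and $\eta_r'=-1/r$ on $(0,r)$ produces $\frac1r\int_0^r\bar Q_-(t,x)\,dx\le0$ for a.e.\ $t$; letting $r\to0$ and using monotonicity to identify the trace gives $\bar Q_-(t,0^+)\le0$, whence $\bar Q_-(t,x)\le\bar Q_-(t,0^+)\le0$ a.e. The right boundary is identical with $w=\rho_+(t)$ and a tent supported near $x=1$ of positive slope, giving $\frac1r\int_{1-r}^1\bar Q_+\ge0$, hence $\bar Q_+(t,1^-)\ge0$ and $\bar Q_+\ge0$ by monotonicity.

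I expect the main obstacle to be the bookkeeping of boundary terms under the double limit: one must check that the viscous boundary fluxes vanish \emph{identically}, not merely in the limit, which is where the normalization $Q(w,w)=\partial_v S(v,w)|_{v=w}=0$ is indispensable, and that the source $\ve\,\partial_w S\,\dot\rho_-$ is genuinely $o(1)$, which forces the $C^1$ regularity of $\rho_\pm$. Reconciling the weak-$\star$ Young-measure limit with the explicit time dependence carried by $w=\rho_\pm(t)$ is the remaining delicate point, but it is covered by the stated convergence for continuous $F(t,x,\lambda)$.
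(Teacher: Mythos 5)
Your proposal is correct and follows essentially the same route as the paper: the same viscous entropy identity with the source $\ve\,\partial_wS\,\rho_-'$, the same observation that the $x=0$ boundary terms vanish identically because $Q(w,w)=\partial_vS(v,w)|_{v=w}=0$ and $u^{\ve,\delta}(t,0)=\rho_-(t)$, the same use of Proposition \ref{prop:priori} to kill the $\delta$-cross term, and the same order of limits $\delta\to0$ then $\ve\to0$ leading to $\iint\bar Q_-\,\partial_x\vf\ge0$ for test functions unconstrained at $x=0$. Your final step with tent functions merely makes explicit the deduction of $\bar Q_-\le0$ from that integral inequality, which the paper leaves implicit.
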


\begin{proof}
Recall that $u^{\ve,\delta}$ is the classical solution of \eqref{eq:viscous}. 
For $w \in C^1([0, T])$ and boundary entropy--entropy flux $(S,Q)$, 
\begin{align*}
  &\ve\partial_tS(u^{\ve,\delta},w) = \ve\partial_uS(u^{\ve,\delta},w)\partial_tu^{\ve,\delta} + \ve\partial_wS(u^{\ve,\delta},w)w' \\
  =&\:\delta\partial_x^2S(u^{\ve,\delta},w) - \delta\partial_u^2S(u^{\ve,\delta},w)(\partial_xu^{\ve,\delta})^2 - \partial_xQ(u^{\ve,\delta},w) + \ve\partial_wS(u^{\ve,\delta},w)w'. 
\end{align*}
Therefore, for $\vf \in C^\infty(\Om_T)$ such that $\vf(0,x) = \vf(T,x) = 0$, 
\begin{align*}
  &\iint \big[\ve S(u^{\ve,\delta},w)\partial_t\vf + Q(u^{\ve,\delta},w)\partial_x\vf + \ve\partial_wS(u^{\ve,\delta},w)w'\vf\big]dx\,dt \\
  =\ &\delta\iint \big[\partial_xS(u^{\ve,\delta},w)\partial_x\vf + \partial_u^2S(u^{\ve,\delta},w)(\partial_xu^{\ve,\delta})^2\vf\big]dx\,dt \\
  &+ \int_0^T \big[Q(u^{\ve,\delta}(t,1),w(t)) - \delta\partial_xS(u^{\ve,\delta}(t,1),w(t))\big]\vf(t,1)dt \\
  &- \int_0^T \big[Q(u^{\ve,\delta}(t,0),w(t)) - \delta\partial_xS(u^{\ve,\delta}(t,0),w(t))\big]\vf(t,0)dt. 
\end{align*}
Taking $w = \rho_-$, since $u^{\ve,\delta}(\cdot,0) = \rho_-$ and $Q(w,w) = \partial_uS(w,w) = 0$ for all $w \in \R$, the last line above is $0$. 
Hence, choosing $\vf = \vf_+$ such that 
\begin{equation}
\label{eq:boundary test1}
  \vf_+(t, 1) = 0, \quad \vf_+(0, x) = 0, \quad \vf_+(T, x) = 0, 
\end{equation}
we obtain for any convex boundary entropy $S$ that 
\begin{align*}
  &\iint \big[\ve S(u^{\ve,\delta},\rho_-)\partial_t\vf_+ + Q(u^{\ve,\delta},\rho_-)\partial_x\vf_+ + \ve\partial_wS(u^{\ve,\delta},\rho_-)\rho'_-\vf_+\big]dx\,dt \\
  \ge\:&\delta\iint \partial_uS(u^{\ve,\delta},\rho_-)\partial_xu^{\ve,\delta}\partial_x\vf_+dx\,dt. 
\end{align*}
Let $\delta \to 0+$ and apply the priori estimate in Proposition \ref{prop:priori}, 
\begin{equation*}
  \iint \big[\ve S(u^\ve,\rho_-)\partial_t\vf_+ + Q(u^\ve,\rho_-)\partial_x\vf_+ + \ve\partial_wS(u^\ve,\rho_-)\rho'_-\vf_+\big]dx\,dt \ge 0. 
\end{equation*}
Eventually, let $\ve\to0+$ along the convergent subsequence, 
\begin{equation}
  \iint \bar Q_-(t,x)\partial_x\vf_+(t,x)dx\,dt \ge 0. 
\end{equation}
Since this holds for all nonnegative, smooth test function $\vf_+$ satisfying \eqref{eq:boundary test1},  we conclude that 
$\bar Q_- \le 0$ almost everywhere and $\partial_x\bar Q_- \le 0$ as a distribution. 
For $\bar Q_+$, we replace $(\rho_-,\vf_+)$ with $(\rho_+,\vf_-)$ such that 
\begin{equation}
\label{eq:boundary test2}
  \vf_-(t, 0) = 0, \quad \vf_-(0, x) = 0, \quad \vf_-(T, x) = 0, 
\end{equation}
and repeat the same argument. 
\end{proof}

Theorem \ref{thm:quasi} follows directly from the following consequence. 

\begin{corollary}
The followings hold for a.e. $(t,x)$: 
\begin{enumerate}
  \item If $\rho_-(t) < m$, $\rho_+(t) < \rho_-^*(t)$ then $\nu_{t,x} = \delta_{\rho_-(t)}$,
  \item If $\rho_+(t) > m$, $\rho_-(t) > \rho_+^*(t)$ then $\nu_{t,x} = \delta_{\rho_+(t)}$,
  \item If $\rho_-(t) \ge m$, $\rho_+(t) \le m$ then $\nu_{t,x} = \delta_m$,
\end{enumerate}
where for $u\in[0,1]$, $u^*$ is defined above \eqref{eq:critical-line}. 
\end{corollary}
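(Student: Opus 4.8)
The plan is to show that, for almost every $(t,x)$, the limit Young measure $\nu_{t,x}$ must be a single Dirac mass, by combining two ingredients: the constancy in $x$ of the averaged flux, and a rich family of boundary entropy inequalities coming from Proposition \ref{prop:bd-ent-prod}. For the first ingredient I would pass to the limit $\ve\to0$ in the weak formulation of $\ve\partial_t u^\ve+\partial_x J(u^\ve)=0$ tested against $\vf\in C_0^\infty((0,\infty)\times(0,1))$: the term $\ve\iint u^\ve\partial_t\vf$ vanishes since $\|u^\ve\|_{L^\infty}\le1$, leaving $\iint \bar J\,\partial_x\vf\,dx\,dt=0$, where $\bar J(t,x):=\int J(\lambda)\,\nu_{t,x}(d\lambda)$. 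Hence $\partial_x\bar J=0$ in $\mathcal D'$, so $\bar J(t,x)=\mathcal J(t)$ is independent of $x$.

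For the second ingredient, the key remark is that for \emph{every} level $k$ one can build an admissible smooth convex boundary entropy--flux pair, relative to a reference $w$, whose flux is the semi-Kru\v{z}kov flux at level $k$, \emph{provided $k$ lies on the correct side of $w$}: for $k>w$ one mollifies $S(v,w)=(v-k)_+$, for $k<w$ one mollifies $(k-v)_+$; in both cases \eqref{eq:bd-ent} holds and the flux tends to $(J(v)-J(k))\mathbf 1_{v>k}$, respectively $(J(k)-J(v))\mathbf 1_{v<k}$. Feeding these into Proposition \ref{prop:bd-ent-prod} with $w=\rho_-(t)$ (giving $\bar Q_-\le0$) and $w=\rho_+(t)$ (giving $\bar Q_+\ge0$), and using that these pointwise signs hold for a.e.\ $(t,x)$ in the \emph{whole} domain, yields for a.e.\ $(t,x)$ the families
\[
\int_{\lambda<k}\!\big(J(k)-J(\lambda)\big)\nu_{t,x}(d\lambda)\le 0\quad(k<\rho_-),\qquad \int_{\lambda>k}\!\big(J(\lambda)-J(k)\big)\nu_{t,x}(d\lambda)\ge 0\quad(k>\rho_+),
\]
together with the two symmetric ones. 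Note that only the signs of $\bar Q_\pm$, and not the monotonicity $\partial_x\bar Q_\pm\le0$, are used.

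The rest is a support-squeezing. Choosing in the left family a level $k\le m$, where $J$ is increasing, makes $J(k)-J(\lambda)>0$ on $\{\lambda<k\}$, so $\nu_{t,x}([0,k))=0$; letting $k\uparrow\min(\rho_-,m)$ gives $\mathrm{supp}\,\nu_{t,x}\subseteq[\min(\rho_-,m),1]$. Symmetrically a right level $k\ge m$ forces $\nu_{t,x}((k,1])=0$, hence $\mathrm{supp}\,\nu_{t,x}\subseteq[0,\max(\rho_+,m)]$, and altogether $\mathrm{supp}\,\nu_{t,x}\subseteq[\min(\rho_-,m),\max(\rho_+,m)]$. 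In case (3), $\rho_-\ge m\ge\rho_+$, this interval collapses to $\{m\}$, so $\nu_{t,x}=\delta_m$ with no further argument. In cases (1) and (2) the support interval is $[\rho_-,\max(\rho_+,m)]$, respectively $[\min(\rho_-,m),\rho_+]$; there I would use $\bar J=\mathcal J$ together with the level-$k=\rho_-$ (resp.\ $k=\rho_+$) semi-Kru\v{z}kov inequality, which reads $\mathcal J\le J(\rho_-)$ (resp.\ $\mathcal J\le J(\rho_+)$). The strict condition $\rho_+<\rho_-^*$ (resp.\ $\rho_->\rho_+^*$) guarantees that $J$ attains its minimum over the support interval \emph{only} at the target value; combined with the trivial bound $\mathcal J\ge\min_{\mathrm{supp}}J$ this yields $\mathcal J=J(\text{target})$, whence $\int\big(J(\lambda)-J(\text{target})\big)\nu_{t,x}(d\lambda)=0$ with strictly positive integrand off the target, forcing $\nu_{t,x}=\delta_{\rho_-}$, resp.\ $\delta_{\rho_+}$.

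I expect the main obstacle to be the straddling sub-cases of (1) and (2), where $\rho_+>m$ (resp.\ $\rho_-<m$) so the support interval crosses $m$ and bare squeezing breaks down: a left level $k\in(\rho_-,m)$ has conjugate $k^*\in(m,\rho_-^*)$ that may fall below $\rho_+$, so $J(\lambda)-J(k)$ changes sign on the support and cannot eliminate the mass near $\rho_-^*$. This is precisely where the constant-flux identity $\bar J=\mathcal J$ and the strict inequality $\rho_+<\rho_-^*$ become indispensable, replacing the failed squeezing by the flux comparison. A second, routine, technical point is justifying the semi-Kru\v{z}kov boundary pairs as $C^2$ limits so that Proposition \ref{prop:bd-ent-prod} applies, the inequalities being stable under this mollification.
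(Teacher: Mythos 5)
Your proposal is correct and follows essentially the same route as the paper: use only the signs $\bar Q_-\le 0$, $\bar Q_+\ge 0$ from Proposition \ref{prop:bd-ent-prod} with Kru\v{z}kov-type boundary entropies to first confine $\mathrm{supp}\,\nu_{t,x}$ to $[\rho_-(t)\land m,\rho_+(t)\lor m]$ and then, via the entropy at level $\rho_\mp(t)$ whose flux is single-signed on that interval with a unique zero, collapse $\nu_{t,x}$ to a Dirac mass (the paper uses the single entropy $(w\land m-u)_+$ and $|u-w|$ where you use families of semi-Kru\v{z}kov levels, a cosmetic difference). Your extra ingredient $\bar J(t,x)=\mathcal J(t)$ is harmless but superfluous: as your own final display shows, the sign of the integrand on the support already forces $\nu_{t,x}=\delta_{\rho_\mp(t)}$ without any flux-constancy argument.
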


\begin{proof}
Consider the following boundary entropy 
\begin{align*}
  S(u,w) = 
  \begin{cases} 
    w \land m - u, &u \in [0, w \land m), \\ 
    0, &u \in [w \land m, 1]. 
  \end{cases}
\end{align*}
Note that $S$ is not smooth, but it can be approximated by convex, smooth functions easily. 
For instance, let $s \in C^\infty(\R)$ be such that 
\begin{align*}
  s(u) = -u, \ \forall\,u\le-1, \ s(u) = 0, \ \forall\,u\ge1, \ s'' \ge 0. 
\end{align*}
Then $S_a(\cdot,w) \to S(\cdot,w)$ as $a \to 0+$, where 
\begin{align*}
  S_a(u,w) := as\big(a^{-1}(u-w)\big), \quad a>0. 
\end{align*}
The flux corresponding to $S$ is 
\begin{align*}
  Q(u,w) = 
  \begin{cases}
    J(w \land m) - J(u), &u \in [0, w \land m), \\ 
    0, &u \in [w \land m, 1]. 
  \end{cases}
\end{align*}
Since $Q(u,\rho_-) \ge 0$ for all $u \in [0, 1]$ and $\bar Q_- \le 0$, we know that $\nu_{t,x}$ concentrates on its zero set $[\rho_-(t) \land m, 1]$ where $Q(u,\rho_-)=0$. 
A similar argument yields that $\nu_{t,x}$ concentrates on $[0, \rho_+(t) \lor m]$.
Hence, $\nu_{t,x}$ concentrates on 
$$
  I_t = \big[\rho_-(t) \land m, \rho_+(t) \lor m\big]. 
$$
Case 3 follows directly. 
In order to prove case 1 and 2, we choose
$$
  S_*(u,w) = |u-w|, \quad Q_*(u,w) = \text{sign}(u-w)(J(u)-J(w)). 
$$
In case 1, $Q_*(u,\rho_-(t)) \ge 0$ on $I_t$ and the only zero point is $\rho_-(t)$. 
As $\bar Q_- \le 0$, we know that $\nu_{(t,x)} = \delta_{\rho_-}$. 
In Case 2, $Q_*(u,\rho_+(t)) \le 0$ on $I_t$
and the only zero point is $\rho_+(t)$, so the conclusion holds similarly.
\end{proof}

\begin{remark}\label{rem-crit} 
Concerning the case $(\rho_-, \rho_+)(t) \in \Theta$, 
$Q_*(u, \rho_\pm(t))$ has opposite sign in $I_t$ except two zero points $\rho_\pm(t)$, 
therefore $\nu_{t,x}$ concentrates on $\{\rho_-(t), \rho_+(t)\}$. 
Suppose $f(t,x) = \nu_{t,x}(\rho_+(t))$, then 
\begin{equation}
\label{eq:critical}
  \nu_{t,x}(d\la) = [1-f(t,x)]\delta_{\rho_-(t)}(d\la) + f(t,x)\delta_{\rho_+(t)}(d\la). 
\end{equation}
Observing that $J(\rho_+) = J(\rho_-)$, so that 
\begin{equation}
  J(u^\ve(t,x)) \wstar \int_0^1 J(\la)\nu_{t,x}(d\la) = J(\rho_-(t)) = J(\rho_+(t)),
  \quad \ve \to 0, 
\end{equation}
as stated in Remark \ref{rem:current}. 
\end{remark}


\begin{thebibliography}{10}

  
\bibitem{Bahadoran12} C. Bahadoran,
  \emph{Hydrodynamics and Hydrostatics for a
    Class of Asymmetric Particle Systems
    with Open Boundaries}, Commun. Math. Phys. 310, 1--24 (2012),
  (DOI) 10.1007/s00220-011-1395-6

\bibitem{BLN} Bardos, C., Leroux, A.Y, N\'ed\'elec, J.C.:
  \emph{First order quasilinear equations with boundary conditions.}
  Comm. Part. Diff. Equ. 4, 1017--1034 (1979)

\bibitem{DO} Anna De Masi, Stefano Olla,
  \emph{Quasi-static Hydrodynamic limits},
   J. Stat Phys., 161:1037--1058, (2015),
  DOI 10.1007/s10955-015-1383-x.
  
\bibitem{DOLXM} Anna De Masi, Stefano Marchesani, Stefano Olla,
  Lu Xu,
  \emph{Quasi-static limit for the asymmetric simple exclusion},
  arXiv:2103.08019, 2021

  
\bibitem{Derrida93}  Derrida, B., Evans, M. R., Hakim, V., Pasquier, V.:
  \emph{Exact solution of a 1D asymmetric exclusion model
using a matrix formulation}. J. Phys. A 26, 1493--1517 (1993)

\bibitem{KV07} Y.-S. Kwon, A. Vasseur, \emph{Strong Traces for
    Solutions to Scalar Conservation Laws with general flux},
  Arch. Rational Mech. Anal. 185 (2007) 495--513,
  (DOI) 10.1007/s00205-007-0055-7

\bibitem {Malek} J M\'alek, J.Ne\v{c}as, M. Rokyta, M. R\r{u}\v{z}i\v{c}ka,
  \emph{Weak and Measure-valued solution of Evolutionary PDEs},
  in 'Applied Mathematics and Mathematical Computation, 13,
  Springer, 1996.

\bibitem{Otto96} F Otto. \emph{Initial-boundary value problem for a
  scalar conservation law.}
  Comptes rendus de l'Acad\'emie des Sciences. S\'erie 1, Math\'ematique,
  322:729--734, 1996.

\bibitem{Panov05} E.Yu. Panov,
  Existence of strong traces for generalized solutions
  of multidimensional scalar conservation laws,
  J. Hyperbolic Differ. Equ. 2 (4)
  (2005) 885--908,
  DOI 10.1142/S0219891605000658.
  
\bibitem{Schutz99} Popkov, V., Sch\"utz, G.: 
  \emph{Steady-state selection in driven diffusive systems with open boundaries.}
  Europhys. Lett. 48, 257--263 (1999)
  
\bibitem{Uchi04}
  Masaru Uchiyama, Tomohiro Sasamoto and Miki Wadati,
  \emph{Asymmetric simple exclusion process with open boundaries and Askey--Wilson polynomials},
  2004 J. Phys. A: Math. Gen. 37 4985--5002

\bibitem{Vasseur01} Vasseur, A, \emph{Strong Traces for
    Solutions of Multidimensional Scalar Conservation Laws},
  Arch. Rational Mech. Anal. 160 (2001) 181--193,
  (DOI) 10.1007/s002050100157

  
\end{thebibliography}
\end{document}